\newcommand\blfootnote[1]{%
  \begingroup
  \renewcommand\thefootnote{}\footnote{#1}%
  \addtocounter{footnote}{-1}%
  \endgroup
}
\definecolor{header1}{cmyk}{0,0,0,1}
\def \k {\mathbbm{k}}
\def \dim {\operatorname{dim}}
\def \R {\mathbbm{R}}
\def \C {\mathbbm{C}}
\numberwithin{equation}{section}
\numberwithin{table}{section}
\numberwithin{equation}{section}
\newtheorem{theorem}{Theorem}[section]
\newtheorem{proposition}[theorem]{Proposition}
\newtheorem{corollary}[theorem]{Corollary}
\newtheorem{definition}[theorem]{Definition}
\newtheorem{example}[theorem]{Example}
\newtheorem{remark}[theorem]{Remark}
\title{\vspace{-.125in}{\huge\selectfont \textbf{Eigenvalues and equivalence classes of \\ third-order symmetric tensors}}\vspace{-.075in}}
\author{\normalsize{Lishan Fang$^{1}$, Hua-Lin Huang$^{1*}$, Shengyuan Ruan$^{1}$, and Yu Ye$^{2}$}\\
\footnotesize{$^1$ School of Mathematical Sciences, Huaqiao University, Quanzhou 362021, China} \\
\footnotesize{$^{2}$ School of Mathematical Sciences, University of Science and Technology of China, Hefei 230026, China}
}
\date{}
\begin{document}
\maketitle



\blfootnote{$^*$ Corresponding author: hualin.huang@hqu.edu.cn}
\vspace{-.2in}
\begin{abstract}
This paper demonstrates that third-order real symmetric tensors cannot be classified up to equivalence by their eigenvalues only, thereby resolving a problem posed by Qi in 2006. By applying Harrison's center theory, we derive equivalence classes of $2 \times 2 \times 2$ symmetric tensors via the one-to-one correspondence with the canonical forms of their associated binary cubics. For such tensors, we compute the explicit characteristic polynomials and discover two previously unknown coefficients using the combination resultant. Pairs of third-order real symmetric tensors of all dimensions with identical eigenvalues but belonging to different equivalence classes are constructed to illustrate the inapplicability of eigenvalues for classification. 
\end{abstract}

\section{Introduction}\label{sec:intro}

The classification of symmetric tensors provides powerful tools for tackling higher-dimensional problems, with various applications in algebraic geometry, tensor networks, and quantum information; see~\cite{acuaviva2023minimal, dur2000three, landsberg2017geometry} and references therein. For symmetric matrices, eigenvalues play a key role in all kinds of classification problems~\cite{horn2012matrix}. Motivated by this, in~\cite{qi2006rank} Qi raised the following question: ``Can we classify third order and fourth order three-dimensional supersymmetric tensors by their eigenvalues?''. To date, this problem remains unresolved. In this work, we present examples to demonstrate that third-order real symmetric tensors cannot be classified by their eigenvalues.

The theory of eigenvalues is well established for matrices, and Qi~\cite{qi2005eigenvalues, qi2006rank} extended these concepts from the linear to the multilinear setting. Using a variational approach, Lim~\cite{lim2005singular} also independently established definitions for the eigenvalues of tensors. Since their introduction, tensor eigenvalues have become widely used in many fields, including spectral hypergraph theory, quantum physics, and tensor imaging~\cite{li2013characteristic, qi2018tensor}. Several definitions of tensor eigenvalues and characteristic polynomials were introduced in~\cite{qi2005eigenvalues} and further studied in~\cite{cartwright2013number, hu2013characteristic, li2013characteristic, ni2007degree, qi2006rank}. This work adopts the eigenvalues defined in~\cite{qi2006rank}, also known as the Z-eigenvalues in~\cite{qi2005eigenvalues}.

In this article, we apply Harrison's center theory~\cite{harrison1975grothendieck} introduced for homogeneous polynomials to tackle the associated classification problem. The center theory has been successfully applied to polynomial decoupling~\cite{huang2021diagonalizable, huang2022centres} and simultaneous decoupling~\cite{fang2025simultaneous}. 
It was also extended to study products of sums of powers~\cite{huang2022harrison} and to solve higher-degree algebraic equations~\cite{huang2025solving}. 
Building on this framework, we derive canonical forms for binary cubics by analyzing their centers. This yields the classification of their associated third-order two-dimensional symmetric tensors into equivalence classes. 

Using the method of combination resultants \cite{yang1996nonlinear, yuan2010computing, zhou1999preliminary}, we compute the explicit characteristic polynomial for a general $2 \times 2 \times 2$ real symmetric tensor, thereby confirming the form of the characteristic polynomial in \cite{cartwright2013number} and, in addition, determining two previously unknown coefficients. Examples of third-order real symmetric tensors in all dimensions are provided to show that symmetric tensors with identical eigenvalues can belong to different equivalence classes. This gives an answer to the problem of Qi for third-order symmetric tensors. In other words, one cannot classify symmetric tensors by their eigenvalues only, more information should be considered as well.

The remainder of this article is organized as follows. Section~\ref{sec:tensor_preliminary} describes symmetric tensors, eigenvalues, and center theory. Section~\ref{sec:equiv} determines equivalence classes of $2 \times 2 \times 2$ real symmetric tensors based on centers. Section~\ref{sec:tensor_charpoly} derives characteristic polynomials via the combination resultant. Section~\ref{sec:main} presents examples to illustrate that the classification solely by eigenvalues is infeasible. 

\section{Preliminaries}\label{sec:tensor_preliminary}

We begin by recalling the notions of tensors and eigenvalues in Subsections~\ref{sec:tensor} and~\ref{sec:eigenvalue}, respectively; more details are provided in~\cite{cartwright2013number, qi2005eigenvalues, qi2007eigenvalues}. Subsection~\ref{sec:center} reviews Harrison's center theory following~\cite{huang2021diagonalizable}.

\subsection{Real symmetric tensors}\label{sec:tensor}
 
Let $A=(a_{i_{1} \ldots i_{m}})\in T_{m,n}$ be a real $m$-th order $n$-dimensional tensor, i.e., a multi-array with entries $a_{i_{1} \ldots i_{m}} \in \R$, where $i_{j} = 1, \ldots, n$ for $j=1,\ldots,m$. If its entries are invariant under any permutation of indices, then $A$ is a symmetric tensor, which is also called a ``supersymmetric'' tensor in~\cite{qi2005eigenvalues, qi2006rank}. For clarity, we refer to an $m$-th order $n$-dimensional symmetric tensor as~$\underbrace{n \times n \times \cdots \times n}_{m \text{ times}}$ symmetric tensor.

A real homogeneous polynomial $f(x)\in\R[x_1,\ldots,x_n]$ of degree $m$ associated to $A$ is represented as
\[
    f(x) \equiv Ax^m :=\sum_{i_{1}, \ldots, i_{m}=1}^{n} a_{i_{1} \ldots i_{m}}x_{i_1}\ldots x_{i_m},
\]
where the vector $x=(x_1,\ldots,x_n)$ and $x^m$ is an $m$-th order $n$-dimensional rank-one tensor.
Two polynomials $f$ and $g$ are called equivalent, written $f \sim g$, if there exists an invertible matrix $P \in \operatorname{GL}_n(\mathbbm{R})$ such that 
\begin{equation*}
    g(x)=f(Px) = \sum_{j_1,\ldots, j_m =1}^{n} \sum_{i_1,\ldots, i_m =1}^{n} a_{i_{1} \ldots i_{m}} p_{i_1j_1}\ldots p_{i_mj_m} x_{j_1}\ldots x_{j_m}.
\end{equation*}
Obviously, equivalent polynomials can be transformed into the same canonical form. Similarly, if there exists~$P$ such that
\begin{equation*}
    B=AP^m = \left( \sum_{i_1,\ldots, i_m =1}^{n} a_{i_{1} \ldots i_{m}} p_{i_1j_1}\ldots p_{i_mj_m} \right)_{1 \le j_1,\ldots, j_m \le n},
\end{equation*}
$A$ and $B$ are called equivalent. It follows naturally that the equivalence classes of symmetric tensors correspond bijectively to the canonical forms of their associated homogeneous polynomials.

 

\subsection{Tensor Eigenvalues}\label{sec:eigenvalue}

Now we recall the eigenvalues and characteristic polynomials of tensors.

\begin{definition}\label{def:eigenvalue}
Let $A=(a_{i_{1} \ldots i_{m}}) \in T_{m,n}$. A number $\lambda \in \R$ and a vector $x\in \R^{n}$ are an eigenvalue and an eigenvector of~$A$ associated with~$\lambda$, respectively, if they are solutions of the following system:
 \begin{equation}\label{eqn:eigenvalue}
  \begin{cases} 
         Ax^{m-1} = \lambda x,  \\
        x^{T}x = 1.
  \end{cases}
 \end{equation}
\end{definition}

The resultant of System~\eqref{eqn:eigenvalue} is a polynomial in $\lambda$, which vanishes as long as System~\eqref{eqn:eigenvalue} has a nonzero solution. The polynomial is called the \textit{characteristic polynomial} of $A$ and denoted as~$\Psi_{A}(\lambda)$. Qi~\cite{qi2005eigenvalues} proved that the eigenvalues of $A$ are the roots of the characteristic polynomial~$\Psi_{A}(\lambda)$. Moreover, if $m$ is odd, $\Psi_{A}(\lambda)$ is a polynomial in $\lambda^2$ due to the symmetry of the eigenvalues under $\lambda \mapsto -\lambda$. 
  
Unlike Qi~\cite{qi2005eigenvalues} imposing the normalization condition $x^Tx=1$, Cartwright and Sturmfel.~\cite{cartwright2013number} consider eigenpairs instead. An \textit{eigenpair} consists of an eigenvalue and an associated eigenvector. Two eigenpairs $(\lambda, x)$ and $(\lambda', x')$ of the same tensor $A$ are said to be \textit{equivalent} if there exists a nonzero $t\in \C$ such that $t^{m-2}\lambda=\lambda'$ and $tx=x'$. This guarantees that the number of distinct equivalence classes of eigenpairs of a generic tensor is finite. In Qi~\cite{qi2005eigenvalues}, an upper bound of the number of eigenvalues is given as $n(m-1)^{n-1}-1$ when $m$ is even. Cartwright and Sturmfels~\cite{cartwright2013number} further improved it and showed that a generic $m$-th order $n$-dimensional tensor has~$((m-1)^{n}-1)/(m-2)$ equivalence classes of eigenpairs. This count also corresponds to the degree of $\Psi_{A}(\lambda)$.

\subsection{Harrison centers}\label{sec:center}

Harrison's center theory plays a crucial role in decoupling homogeneous polynomials into simpler forms, which can be utilized to derive canonical forms for their classification. Since there is a one-to-one correspondence between symmetric tensors and homogeneous polynomials, this method can be applied to classify symmetric tensors by analyzing their associated polynomials.

Let $f(x_1, \dots, x_n) \in \k[x_1, \dots, x_n]$ be a homogeneous polynomial over the ground field $\k$. Its center~$Z(f)$ is defined as
\begin{equation}\label{eqn:center}
      Z(f) =\left\{ X \in \k^{n \times n} \mid (H_fX)^T=H_fX \right\},
\end{equation}
where $H_f=\left(\frac{\partial^2 f}{\partial x_i \partial x_j}\right)_{1 \le i,\ j \le n}$ is the Hessian matrix of $f$.

The following theorem is useful for our further investigation.

\begin{theorem}\label{thm:center} 
Suppose $f(x_1, \dots, x_n) \in \k[x_1, \dots, x_n]$ is a homogeneous polynomial. Then 
\begin{itemize}
\item[(1)] The center $Z(f)$ is an associative subalgebra of $\k^{n\times n}$.
\item[(2)] If $f$ and $g$ are equivalent, then their centers $Z(f)$ and $Z(g)$ are isomorphic as algebras.
\item[(3)] If $f=f_1+\cdots+f_t$ is a sum of polynomials in disjoint sets of variables, then $Z(f) \cong Z(f_1) \times \cdots \times Z(f_t).$
\end{itemize}   
\end{theorem}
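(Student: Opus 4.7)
The strategy is to unpack the defining condition of $Z(f)$ in the three different settings of (1), (2), and (3). Since $H_f^T=H_f$, the condition $(H_fX)^T=H_fX$ is equivalent to the intertwining relation $X^TH_f=H_fX$, which is the form most amenable to direct manipulation.

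For (1), $Z(f)$ is visibly a linear subspace of $\k^{n\times n}$ containing the identity, since $H_fI=H_f$ is symmetric. The heart of the proof is closure under multiplication. For $X,Y\in Z(f)$ I would compute
\[
H_f(XY)=(H_fX)Y=X^TH_fY=X^TY^TH_f=(YX)^TH_f,
\]
which on transposing gives $(H_f(XY))^T=H_f(YX)$, so $H_f(XY)$ is symmetric precisely when $H_f[X,Y]=0$. The standard route is to establish that $Z(f)$ is actually commutative, from which closure is immediate. I would view $H_f(x)$ as a polynomial matrix whose coefficients are the slice matrices $M_l$ of the underlying symmetric tensor; the defining condition becomes the family of slice-wise identities $X^TM_l=M_lX$, from which one deduces $M_l[X,Y]=0$ for every $l$. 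Non-degeneracy of $f$ (the slices $\{M_l\}$ having no common kernel) then forces $[X,Y]=0$, and associativity is inherited from $\k^{n\times n}$.

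For (2), if $g(x)=f(Px)$ with $P\in\GL_n(\k)$, the chain rule gives $H_g(x)=P^TH_f(Px)P$. Substituting this into $X^TH_g=H_gX$ and conjugating by $P$ shows that $X\in Z(g)$ if and only if $PXP^{-1}\in Z(f)$, so conjugation by $P$ restricts to an algebra isomorphism $Z(g)\to Z(f)$.

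For (3), disjointness of the variable sets makes $H_f$ block-diagonal in the induced partition, with the $i$-th block $H_{f_i}$ depending only on the variables of $f_i$. Writing $X\in Z(f)$ in the conformal block form $(X_{ij})$, the diagonal blocks satisfy $H_{f_i}X_{ii}=X_{ii}^TH_{f_i}$, i.e.\ $X_{ii}\in Z(f_i)$, while the off-diagonal blocks satisfy $H_{f_i}X_{ij}=X_{ji}^TH_{f_j}$. Since the two sides are polynomial matrices in disjoint variable sets, they must independently reduce to the same constant matrix, and non-degeneracy of the summands forces this constant to vanish, giving $X_{ij}=0$ for $i\neq j$. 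Thus $Z(f)$ consists of block-diagonal matrices with $i$-th block in $Z(f_i)$, yielding the direct-product decomposition $Z(f)\cong Z(f_1)\times\cdots\times Z(f_t)$ as algebras. The main obstacle throughout is the commutativity lemma underlying (1): closure under the matrix product is not apparent from the definition and rests on the full polynomial structure of $H_f$ together with a non-degeneracy hypothesis on $f$, after which the remaining two parts are straightforward bookkeeping.
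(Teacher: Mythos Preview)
Your arguments for (2) and (3) align with the paper's. The chain-rule identity $H_g=P^TH_fP$ and the resulting conjugation $Z(g)=P^{-1}Z(f)P$ is exactly the paper's proof of (2). For (3) you spell out what the paper merely calls ``straightforward'', and you are right to flag non-degeneracy of the summands as the mechanism that kills the off-diagonal blocks; indeed (3) can fail otherwise (e.g.\ $f=x^3$ viewed in $\k[x,y]$ as $x^3+0$ has $\dim Z(f)=3$ while the product of the summand centers has dimension $2$), a subtlety the paper glosses over.

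For (1), which the paper defers entirely to the cited references, your sketch has a gap at the key step and then takes an unnecessary detour. From the slice-wise relations $X^TM_l=M_lX$ and $Y^TM_l=M_lY$ taken for each $l$ separately one obtains only that $M_l[X,Y]$ is skew-symmetric, not that it vanishes; the conclusion $M_l[X,Y]=0$ genuinely requires the inter-slice compatibilities $(M_k)_{ij}=(M_i)_{kj}$ coming from full symmetry of the tensor, which you do not invoke. Moreover, once $H_f[X,Y]=0$ is established, your own reduction already gives $XY\in Z(f)$ directly---neither commutativity nor non-degeneracy is needed. The standard clean argument (for $\deg f\ge 3$) works with the associated symmetric multilinear form $\Theta$ and uses a third slot to shuttle $X$ past $Y$:
\[
\Theta(XYu,v,w,\dots)=\Theta(Yu,v,Xw,\dots)=\Theta(u,Yv,Xw,\dots)=\Theta(u,XYv,w,\dots).
\]
So the ``commutativity lemma'' you single out as the main obstacle is in fact a stronger statement (true only under non-degeneracy) that the closure proof does not require.
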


\begin{proof}
Detailed proof for all items can be found in~\cite{harrison1975grothendieck, huang2022centres}. Since items (2) and (3) will be applied later, we include a proof for them below for the convenience of the reader. 

(2) If $f$ is equivalent to $g$, then there exists $P \in \operatorname{GL}_n(\k)$ such that $g(x)=f(Px)$. Let $H_f$ and $H_g$ be the Hessian matrices of $f$ and $g$. It is clear that $H_f$ and $H_g$ are related by $P$, that is, $H_g=P^TH_fP$.
According to Equation~\eqref{eqn:center}, we have $(H_gY)^T=H_gY$ for any $Y\in Z(g)$. By substituting the relation $H_g=P^TH_fP$, the equation becomes \[(H_f PYP^{-1})^T = H_fPYP^{-1}.\] This implies that $PYP^{-1} \in Z(f)$ and consequently $Z(g)=P^{-1}Z(f)P=\{P^{-1}XP \mid \forall X \in Z(f)\}$. Therefore, if $f \sim g$, their centers are isomorphic as algebras, i.e., $Z(f) \cong Z(g)$.

(3) Suppose that $f$ is a direct sum $f=f_1+\cdots+f_t$, i.e. a sum of polynomials in disjoint sets of variables. Thus, the off-diagonal blocks of its Hessian matrix $H_f$ vanish and $H_f$ is a block-diagonal matrix expressed as $H_f=H_{f_1}\oplus\cdots\oplus H_{f_t}$. It is straightforward to see that $Z(f) \cong Z(f_1) \times \cdots \times Z(f_t)$ by Equation~\eqref{eqn:center}. 
\end{proof}

\section{Equivalence classes of $2 \times 2 \times 2$ symmetric tensors}\label{sec:equiv}

We classify the equivalence classes of $2 \times 2 \times 2$ real symmetric tensors by deriving the canonical forms for their associated two-dimensional real homogeneous cubic polynomials. The classification is achieved via Harrison's center theory.

Let $A=(a_{ijk})_{1\le i,j,k \le 2}$ be a $2 \times 2 \times 2$ real symmetric tensor expressed as
\begin{equation*}
A = \left( \begin{pmatrix}
a & b \\
b & c \\
\end{pmatrix} , 
\begin{pmatrix}
b & c \\
c & d \\
\end{pmatrix} \right),  
\end{equation*}
where $a_{111}=a$, $a_{112}=a_{121}=a_{211}=b$, $a_{122}=a_{212}=a_{221}=c$, $a_{222}=d$ and $a,b,c,d \in \R$. The tensor~$A$ corresponds to the homogeneous cubic polynomial~$f(x,y) \in \R[x_1,x_2]$, where
\begin{equation*}
    f(x,y) = ax^3+3bx^2y+3cxy^2+dy^3.
\end{equation*}

The center $Z(f)$ is computed by solving Equation~\eqref{eqn:center}, which reduces to
\begin{equation}\label{eqn:center222}
  \left\{
    \begin{alignedat}{4}
      & a x_{12} && + b x_{22} && = b x_{11} && + c x_{21}, \\
      & b x_{12} && + c x_{22} && = c x_{11} && + d x_{21},
    \end{alignedat}
  \right.
\end{equation}
where $X=(x_{ij})_{1\le i,j \le 2}$. It follows that $\dim Z(f)$ may be $4$, $3$ or $2$. Huang et al.~\cite{huang2022centres} showed that $f$ is degenerate if and only if $\dim Z(f)>2$. If $\dim Z(f)=2$, then $f$ is nondegenerate and $Z(f)$ forms a commutative subalgebra of $\R^{2\times 2}$. In this case, the center $Z(f)$ is isomorphic to $\R \times \R$, $\C$ or $\R[\epsilon]/(\epsilon^2)$. The canonical forms of $f$ are now obtained using their center algebra, as summarized below.

\begin{theorem}~\label{thm:standard222}
Any two-dimensional homogeneous cubic polynomial $f(x,y) \in \R[x_1,x_2]$ can be transformed into one of the following canonical forms under a change of variables.

\begin{itemize}
\item[(1)] $f=0$ if and only if $\dim Z(f)=4$.
\item[(2)] $f \sim x^3$ if and only if $\dim Z(f)=3$.
\item[(3)] $f \sim x^3+y^3$ if and only if $Z(f)\cong \R \times \R$.
\item[(4)] $f \sim x^3-3xy^2$ if and only if $Z(f)\cong \C$.
\item[(5)] $f \sim 3x^2y$ if and only if $Z(f)\cong \R[\epsilon]/(\epsilon^2)$.
\end{itemize}
\end{theorem}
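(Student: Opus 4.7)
The plan is to prove the five biconditionals by combining direct computations with Theorem~\ref{thm:center}. For the ``if'' direction of each item I would substitute the coefficients of the listed canonical form into the linear system~\eqref{eqn:center222} and read off the center: $Z(0)=\R^{2\times 2}$ (dim $4$); $Z(x^3)$ is the three-dimensional space of matrices with vanishing $(1,2)$-entry; $Z(x^3+y^3)$ is the diagonal matrices, isomorphic to $\R\times\R$; $Z(x^3-3xy^2)$ consists of matrices of the form $\left(\begin{smallmatrix} a & -b \\ b & a\end{smallmatrix}\right)$, isomorphic to $\C$; and $Z(3x^2y)$ consists of matrices of the form $\left(\begin{smallmatrix} a & 0 \\ b & a\end{smallmatrix}\right)$, isomorphic to $\R[\epsilon]/(\epsilon^2)$. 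Since these five algebras are pairwise non-isomorphic, Theorem~\ref{thm:center}(2) immediately gives that the five canonical forms are pairwise inequivalent and that each ``if'' direction holds.

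For the converse, I would first express $\dim Z(f)$ as $4$ minus the rank of the $2\times 4$ coefficient matrix of~\eqref{eqn:center222} in the unknowns $x_{11},x_{12},x_{21},x_{22}$. The case $\dim Z(f)=4$ forces this rank to vanish, i.e.\ $a=b=c=d=0$, so $f=0$. When $\dim Z(f)=3$, the two rows of the coefficient matrix are proportional; I expect the resulting relations on $a,b,c,d$ to be equivalent to $f$ having a triple linear factor over $\R$, after which a linear change of variables sending this factor to $x$ yields $f\sim x^3$.

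When $\dim Z(f)=2$, the result of Huang et al.~\cite{huang2022centres} cited before the statement guarantees that $f$ is nondegenerate and that $Z(f)$ is a commutative two-dimensional $\R$-subalgebra of $\R^{2\times 2}$. Picking any nonscalar element $M\in Z(f)$, its minimal polynomial determines the algebra: two distinct real roots give $\R\times\R$, a pair of complex conjugate roots gives $\C$, and a repeated real root gives $\R[\epsilon]/(\epsilon^2)$. In the $\R\times\R$ case I would use the primitive idempotents $e_1,e_2\in Z(f)$ to choose a basis of $\R^2$ in which $H_f$ block-diagonalizes, so that Theorem~\ref{thm:center}(3) forces $f=\alpha x^3+\beta y^3$; nondegeneracy gives $\alpha\beta\ne0$ and a diagonal rescaling yields $f\sim x^3+y^3$. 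In the other two cases I would change coordinates to bring the distinguished element of $Z(f)$ into its real normal form ($\left(\begin{smallmatrix} 0 & -1\\ 1 & 0\end{smallmatrix}\right)$ for $\C$, and $\left(\begin{smallmatrix} 0 & 0\\ 1 & 0\end{smallmatrix}\right)$ for $\R[\epsilon]/(\epsilon^2)$), then translate ``$Z(f)$ contains this element'' back into linear constraints on $(a,b,c,d)$ via~\eqref{eqn:center222}, and check that the resulting low-dimensional family of cubics collapses, after a scalar rescaling, to $x^3-3xy^2$ and $3x^2y$ respectively.

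The main obstacle I anticipate is this last reduction: after fixing the normal-form generator of $Z(f)$, the residual change-of-variables freedom lies in the centralizer of that generator in $\operatorname{GL}_2(\R)$, and one must verify that this centralizer acts transitively (up to a nonzero scalar that only affects $f$ globally) on the surviving family of cubics. In other words, the orbit of a cubic with, say, complex center under the $\GL_2(\R)$-centralizer of $\left(\begin{smallmatrix} 0 & -1\\ 1 & 0\end{smallmatrix}\right)$ must exhaust all nonzero cubics of that type, leaving no hidden modulus. I expect this to be a short explicit calculation, but it is where the content beyond pure algebra actually resides.
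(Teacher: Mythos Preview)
Your proposal is correct and follows essentially the same route as the paper: compute the centers of the five canonical forms for the ``$\Rightarrow$'' direction, and for ``$\Leftarrow$'' bring $Z(f)$ into a standard matrix form and read off the constraints on $(a,b,c,d)$ from System~\eqref{eqn:center222}. The only noteworthy difference is in item~(4): the paper, after obtaining $c=-a$, $d=-b$, writes $f=\tfrac12(a+bi)(x-yi)^3+\tfrac12(a-bi)(x+yi)^3$ and chooses a complex cube root $(u+vi)^3=a+bi$ to exhibit the transformation explicitly, whereas you propose to argue that the centralizer of $\left(\begin{smallmatrix}0&-1\\1&0\end{smallmatrix}\right)$ in $\GL_2(\R)$ (namely $\R$-linear identifications of $\C^\times$) acts transitively on the nonzero pairs $(a,b)$; these are the same computation in different clothing, and your anticipated ``obstacle'' is exactly the cube-root step the paper carries out.
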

\begin{proof}
Item (1) is obvious. The ``$\Rightarrow$" parts of items (2-5) follow from the direct computation of Equation~\eqref{eqn:center}. Now we include a proof of the ``$\Leftarrow$" parts for items (2-5) below.

(2) If $\dim Z(f) = 3$, two equations in System~\eqref{eqn:center222} are linearly dependent. This implies that $(b,c,d) = \alpha (a,b,c)$, or $\alpha (b,c,d) = (a,b,c)$ for some constant~$\alpha$. Assuming the former without loss of generality, then we have
\begin{equation*}
    f(x,y) = ax^3+3\alpha ax^2y+3\alpha^2 axy^2+\alpha^3 ay^3 = a(x+\alpha y)^3.
\end{equation*}
It is obvious that $f \sim x^3$.

(3) If $Z(f)\cong \R \times \R$, then under a suitable change of variable we may assume
\begin{equation*}
    Z(f)=\left\{ \begin{pmatrix}
        s & 0 \\ 0 & t
    \end{pmatrix} \vline s,t \in \R \right\}.
\end{equation*}
We can see that $b=c=0$ from System~\eqref{eqn:center222} and $f(x,y) = ax^3+dy^3$, where $ad \ne 0$. Therefore, we have $f \sim x^3+y^3$.

(4) For $Z(f)\cong \C$, we may assume that
\begin{equation*}
    Z(f)=\left\{ \begin{pmatrix}
        s & -t \\ t & s
    \end{pmatrix} \vline s,t \in \R \right\},
\end{equation*}
it is easy to see $c=-a$ and $d=-b$ from System~\eqref{eqn:center222}. Choose a complex number $u+vi$ such that $(u+vi)^3=a+bi$. Let $p(x,y)=ux+vy$ and $q(x,y)=vx-uy$. We obtain
\begin{equation*}
\begin{split}
f(x,y) & =  ax^3+3bx^2y-3axy^2-by^3 \\
 & = \frac{1}{2} (a+bi)(x-yi)^3+\frac{1}{2} (a-bi)(x+yi)^3 \\
 & = \frac{1}{2} \left(p(x,y)+q(x,y)i\right)^3+\frac{1}{2} \left(p(x,y)-q(x,y)i\right)^3 \\
 & = p(x,y)\left(p(x,y)^2-3q(x,y)^2\right).
\end{split}
\end{equation*}
Now we can see that $f \sim x^3-3xy^2$.

(5) For $Z(f)\cong \R[\epsilon]/(\epsilon^2)$, we assume that
\begin{equation*}
    Z(f)=\left\{ \begin{pmatrix}
        s & 0 \\ t & s
    \end{pmatrix} \vline s,t \in \R \right\},
\end{equation*}
this gives $c=d=0$ according to System~\eqref{eqn:center222}. Thus, $f(x,y)=ax^3+3bx^2y=x^2(ax+3by)$ and $f \sim 3x^2y$.
\end{proof}

\begin{corollary}\label{cor:standard}
The equivalence classes of $2 \times 2 \times 2$ real symmetric tensors are listed below, corresponding to the canonical forms of their associated homogeneous polynomials in Theorem~\ref{thm:standard222}.

\begin{itemize}
\item[(1)] $\left( \begin{pmatrix}
0 & 0 \\
0 & 0 \\
\end{pmatrix} ,
\begin{pmatrix}
0 & 0 \\
0 & 0 \\
\end{pmatrix} \right)$.
\item[(2)] $\left( \begin{pmatrix}
1 & 0 \\
0 & 0 \\
\end{pmatrix} ,
\begin{pmatrix}
0 & 0 \\
0 & 0 \\
\end{pmatrix} \right)$.
\item[(3)] $\left( \begin{pmatrix}
1 & 0 \\
0 & 0 \\
\end{pmatrix} ,
\begin{pmatrix}
0 & 0 \\
0 & 1 \\
\end{pmatrix} \right)$. 
\item[(4)] $\left( \begin{pmatrix}
1 & 0 \\
0 & -1 \\
\end{pmatrix} ,
\begin{pmatrix}
0 & -1 \\
-1 & 0 \\
\end{pmatrix} \right)$. 
\item[(5)] $\left( \begin{pmatrix}
0 & 1 \\
1 & 0 \\
\end{pmatrix} , 
\begin{pmatrix}
1 & 0 \\
0 & 0 \\
\end{pmatrix} \right)$. 
\end{itemize}
\end{corollary}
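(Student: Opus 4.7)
The plan is to exploit the bijection between $2 \times 2 \times 2$ real symmetric tensors and binary cubic polynomials established in Section~\ref{sec:tensor}, and to read off the tensor entries directly from each canonical polynomial form given in Theorem~\ref{thm:standard222}. Recall that a symmetric tensor
\[
A = \left(\begin{pmatrix} a & b \\ b & c \end{pmatrix}, \begin{pmatrix} b & c \\ c & d \end{pmatrix}\right)
\]
corresponds bijectively to the cubic $f(x,y) = ax^3 + 3bx^2 y + 3cxy^2 + dy^3$, and by the construction in Section~\ref{sec:tensor} this correspondence carries tensor equivalence to polynomial equivalence. Consequently each canonical form of $f$ produces a unique canonical tensor representative, and together with Theorem~\ref{thm:standard222} this gives a complete list of equivalence classes.

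The verification then proceeds case by case. For item~(1) all coefficients vanish, yielding the zero tensor. For $f \sim x^3$ take $a = 1$ and $b = c = d = 0$. For $f \sim x^3 + y^3$ take $a = d = 1$ and $b = c = 0$. For $f \sim x^3 - 3xy^2$ read $a = 1$, $b = 0$, $c = -1$, $d = 0$, taking care to cancel the factor of $3$ on the $xy^2$ monomial. Finally, for $f \sim 3x^2 y$ the factor of $3$ on $x^2 y$ cancels to give $b = 1$ while $a = c = d = 0$. Substituting each tuple $(a,b,c,d)$ into the slice form of $A$ reproduces precisely the five tensors listed.

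There is essentially no substantive obstacle here; the heavy lifting has already been done in Theorem~\ref{thm:standard222}, and the only care required is the bookkeeping of the factors of $3$ attached to the mixed coefficients $b$ and $c$ when passing between polynomial and tensor entries. The corollary's role is to present the classification on the tensor side rather than on the polynomial side, so the proof amounts to rewriting the canonical binary cubics as their associated slice matrices and invoking the tensor-polynomial dictionary.
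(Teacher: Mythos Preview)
Your proposal is correct and matches the paper's approach: the paper states the corollary without proof, treating it as an immediate translation of the canonical polynomial forms in Theorem~\ref{thm:standard222} via the tensor--polynomial correspondence of Section~\ref{sec:tensor}. Your case-by-case reading of the coefficients $(a,b,c,d)$ from each canonical cubic, with the bookkeeping of the factors of $3$, is exactly what is needed to make this explicit.
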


\begin{remark}\label{rmk:complex}
Over the field $\C$ of complex numbers, the canonical forms of two-dimensional homogeneous cubic polynomials and the equivalence classes of $2 \times 2 \times 2$ symmetric tensors are identical to those in the real cases, except that item (4) is merged into item (3). The difference arises from the two-dimensional commutative subalgebraic structures of $\C^{2\times 2}$, which are isomorphic either to $\C \times \C $ or $\C[\epsilon]/(\epsilon^2)$. In contrast to the real case, there is a field extension of degree two over $\R$ which corresponds to item (4), while this is not the case since $\C$ is algebraically closed. 
\end{remark}

\section{Eigenvalues of $2 \times 2 \times 2$ symmetric tensors}\label{sec:tensor_charpoly}

Let $A$ be a general $2 \times 2 \times 2$ real symmetric tensor, where
\begin{equation*}
A = \left( \begin{pmatrix}
a & b \\
b & c \\
\end{pmatrix} , 
\begin{pmatrix}
b & c \\
c & d \\
\end{pmatrix} \right).  
\end{equation*}
According to Definition~\ref{def:eigenvalue}, eigenvalues $\lambda \in \R$ and associated eigenvectors of $A$ must satisfy
\begin{equation}\label{eqn:charac_nonlinear}
  \begin{cases} 
    \begin{aligned}
      ax^2+2bxy+cy^2-\lambda x &= 0, \\
      bx^2+2cxy+dy^2-\lambda y &= 0,
    \end{aligned} \\
    x^2+y^2-1 = 0.
  \end{cases}
\end{equation}
The characteristic polynomial~$\Psi_{A}(\lambda)$ is a polynomial associated with $A$, whose roots are the eigenvalues of $A$.

In the following, we calculate the characteristic polynomial~$\Psi_{A}(\lambda)$ by eliminating $x, y$ from System~\eqref{eqn:charac_nonlinear} using the combination resultant proposed in~\cite{zhou1999preliminary}, which was developed to eliminate variables from systems of three quadratic equations. Similar techniques have been applied to compute the resultant~\cite{cox2005using}, Bezout matrix~\cite{yang1996nonlinear}, and Dixon matrix~\cite{yuan2010computing} for nonlinear polynomial systems. For the convenience of the reader, we give a brief description of the method below.

Consider a system of three quadratic equations 
\begin{equation}\label{eqn:nonlinear}
  \left\{
    \begin{alignedat}{7}
      F_{1}(x,y) &= a_1x^2 &&+ a_2xy &&+ a_3y^2 &&+ a_4x &&+ a_5y &&+ a_6 &&= 0, \\
      F_{2}(x,y) &= b_1x^2 &&+ b_2xy &&+ b_3y^2 &&+ b_4x &&+ b_5y &&+ b_6 &&= 0, \\
      F_{3}(x,y) &= c_1x^2 &&+ c_2xy &&+ c_3y^2 &&+ c_4x &&+ c_5y &&+ c_6 &&= 0,
    \end{alignedat}
  \right.
\end{equation}
where the coefficients $a_{j}, b_{j}, c_{j}\in \C$ for $1 \le j \le 6$. Note that $F_{i}(x,y)$ can be treated as a homogeneous linear polynomial with the variables $(x^2,xy,y^2,x,y,1)$, and System~\eqref{eqn:nonlinear} becomes a system of linear equations. More precisely, given a solution $(x_0, y_0)$ of the previous system, then $(x_0^2,x_0y_0,y_0^2,x_0,y_0,1)$ is a solution to the following system of linear equations
\begin{equation*}
  \left\{
    \begin{alignedat}{7}
      & a_1z_1 &&+ a_2z_2 &&+ a_3z_3 &&+ a_4z_4 &&+ a_5z_5 &&+ a_6z_6 &&= 0, \\
      & b_1z_1 &&+ b_2z_2 &&+ b_3z_3 &&+ b_4z_4 &&+ b_5z_5 &&+ b_6z_6 &&= 0, \\
      & c_1z_1 &&+ c_2z_2 &&+ c_3z_3 &&+ c_4z_4 &&+ c_5z_5 &&+ c_6z_6 &&= 0.
    \end{alignedat}
  \right.
\end{equation*}
It is well known that a homogeneous linear system with an equal number of unknowns and equations has nonzero solutions if and only if the determinant of the corresponding coefficient matrix equals zero. 
Since the three equations in System~\eqref{eqn:nonlinear} are insufficient to form a square matrix for the determinant, we generate three additional quadratic equations that vanish at all common zeros of System~\eqref{eqn:nonlinear}. 

We begin by decomposing each polynomial $F_{i}(x,y)$ into three components, where $F_{i}(x,y) = \sum_{k=1}^{3}f_{ik}(x,y)$ for $i=1,2,3$. A $3\times 3$ determinant is constructed as
\begin{equation*}\label{eqn:nonlinear_det}
F(x,y) =  
\begin{vmatrix}
    f_{11}(x,y) & f_{12}(x,y) & f_{13}(x,y) \\ 
    f_{21}(x,y) & f_{22}(x,y) & f_{23}(x,y) \\ 
    f_{31}(x,y) & f_{32}(x,y) & f_{33}(x,y) 
\end{vmatrix}
=\begin{vmatrix}
    f_{11}(x,y) & f_{12}(x,y) & F_1(x,y) \\ 
    f_{21}(x,y) & f_{22}(x,y) & F_2(x,y) \\ 
    f_{31}(x,y) & f_{32}(x,y) & F_3(x,y) 
\end{vmatrix}.
\end{equation*}
It is clear that the zeros of $F(x,y)$ contain the common zeros of System~\eqref{eqn:nonlinear}. 
We select three specific combinations to construct the three additional quadratic equations. First, the determinant is defined as 
\begin{equation*}
\overline{F}_4(x,y) = \begin{vmatrix}
    a_1x^2 & a_2xy+a_3y^2 & a_4x+a_5y+a_6 \\ 
    b_1x^2 & b_2xy+b_3y^2 & b_4x+b_5y+b_6 \\ 
    c_1x^2 & c_2xy+c_3y^2 & c_4x+c_5y+c_6 
\end{vmatrix}.
\end{equation*}
By factoring common terms $x^2$ and $y$ from the first and second columns, respectively, we obtain a quadratic equation
\begin{equation*}
    F_4(x,y)  =  
    \begin{vmatrix}
        a_1 & a_2x+a_3y & a_4x+a_5y+a_6 \\ 
        b_1 & b_2x+b_3y & b_4x+b_5y+b_6 \\ 
        c_1 & c_2x+c_3y & c_4x+c_5y+c_6 
    \end{vmatrix} 
    = \tau_{124}x^2+(\tau_{125}+\tau_{134})xy+\tau_{135}y^2+\tau_{126}x+\tau_{136}y,
\end{equation*}
where 
$
\tau_{ijk} =  
\begin{vmatrix}
    a_i & a_j & a_k \\ 
    b_i & b_j & b_k \\ 
    c_i & c_j & c_k
\end{vmatrix},
$ 
$1 \le i,j,k \le 6$, $i \ne j$, $j \ne k$ and $k \ne i$. Next, we construct two more determinants with different combinations
\begin{equation*}
\overline{F}_5(x,y) =  
\begin{vmatrix}
    a_6 & a_3y^2+a_5y & a_1x^2+a_2xy+a_4x \\ 
    b_6 & b_3y^2+b_5y & b_1x^2+b_2xy+b_4x \\ 
    c_6 & c_3y^2+c_5y & c_1x^2+c_2xy+c_4x 
\end{vmatrix}, \quad
\overline{F}_6(x,y) =  
\begin{vmatrix}
    a_3y^2 & a_1x^2+a_2xy & a_4x+a_5y+a_6 \\ 
    b_3y^2 & b_1x^2+b_2xy & b_4x+b_5y+b_6 \\ 
    c_3y^2 & c_1x^2+c_2xy & c_4x+c_5y+c_6 
\end{vmatrix}.
\end{equation*}
After canceling common factors, we obtain another two quadratic equations as
\begin{alignat*}{1}
    F_5(x,y) &=  
    \begin{vmatrix}
        a_6 & a_3y+a_5 & a_1x+a_2y+a_4 \\ 
        b_6 & b_3y+b_5 & b_1x+b_2y+b_4 \\ 
        c_6 & c_3y+c_5 & c_1x+c_2y+c_4 
    \end{vmatrix} 
    = -\tau_{136}xy-\tau_{236}y^2-\tau_{156}x+(\tau_{346}-\tau_{256})y-\tau_{456}, \\
    F_6(x,y) &=  
    \begin{vmatrix}
        a_3 & a_1x+a_2y & a_4x+a_5y+a_6 \\ 
        b_3 & b_1x+b_2y & b_4x+b_5y+b_6 \\ 
        c_3 & c_1x+c_2y & c_4x+c_5y+c_6 
    \end{vmatrix} 
    = \tau_{314}x^2+(\tau_{315}+\tau_{324})xy+\tau_{325}y^2+\tau_{316}x+\tau_{326}y.
\end{alignat*}
As before, it is ready to see that the common zeros of $F_4(x,y),F_5(x,y)$ and $F_6(x,y)$ contain those of System~\eqref{eqn:nonlinear}. 

Then $\{ F_{i} (x, y) = 0 \mid 1 \le i \le 6\}$ becomes a system of homogeneous linear equations in variables $z_1, \dots, z_6$ with coefficient matrix
\begin{equation*}
C =  
\begin{pmatrix}
     a_1 & a_2 & a_3 & a_4 & a_5 & a_6 \\ 
     b_1 & b_2 & b_3 & b_4 & b_5 & b_6\\ 
     c_1 & c_2 & c_3 & c_4 & c_5 & c_6 \\ 
     \tau_{124} & \tau_{125}+\tau_{134} & \tau_{135} & \tau_{126} & \tau_{136} & 0 \\ 
     0 & -\tau_{136} & -\tau_{236} & -\tau_{156} & \tau_{346}-\tau_{256} & -\tau_{456} \\ 
     \tau_{314} & \tau_{315}+\tau_{324} & \tau_{325} & \tau_{316}& \tau_{326} & 0
\end{pmatrix}.
\end{equation*}
Thus, if System~\eqref{eqn:nonlinear} has nonzero solutions, then $\det(C)=0$. In accordance with \cite{zhou1999preliminary}, the determinant $\det(C)$ is called a combination resultant of System~\eqref{eqn:nonlinear}.

Now we apply the previous process to System \eqref{eqn:charac_nonlinear} and obtain its combination resultant
\begin{equation}\label{eqn:char_det}
\Psi_{A}(\lambda) = 
\begin{vmatrix}
     a & 2b & c & -\lambda & 0 & 0 \\ 
     b & 2c & d & 0 & -\lambda & 0 \\ 
     1 & 0 & 1 & 0 & 0 & -1 \\ 
     2c\lambda & (-3b+d)\lambda & (a-c)\lambda & 2(b^2-ac) & bc-ad & 0 \\ 
     0 & ad-bc & 2(bd-c^2) & -a\lambda & -(d+2b)\lambda & \lambda^2 \\ 
     (b-d)\lambda & (3c-a)\lambda & -2b\lambda & ad-bc & 2bd-2c^2 & 0 
\end{vmatrix}.
\end{equation}
This turns out to be the desired characteristic polynomial of the general $2 \times 2 \times 2$ real symmetric tensor.

\begin{theorem}~\label{thm:charac_poly222}
Let $A=(a_{ijk})_{1\le i,j,k \le 2}$ be a $2 \times 2 \times 2$ real symmetric tensor with 
\begin{equation*}
    a_{111}=a, \quad a_{112} = a_{121} = a_{211}=b, \quad a_{122} = a_{212} = a_{221}=c, \quad a_{222}=d.
\end{equation*}
The characteristic polynomial $\Psi_{A}(\lambda)$ of $A$ takes the form of
\begin{equation*}
    \Psi_{A}(\lambda) = \alpha_2\lambda^6 + \alpha_4\lambda^4 + \alpha_6\lambda^2 + \alpha_8,
\end{equation*}
where
\begin{align*}
    \alpha_2 & = -(-a+3c)^2-(3b-d)^2, \\
    \alpha_4 &= a^4+24b^4-6a^3c+24c^4-8b^3d+12c^2d^2+d^4+9b^2(5c^2+d^2) \\
     & +3a^2(4b^2+3c^2-2bd+d^2)-2ac(6b^2+4c^2+6bd+3d^2) -6b(2c^2d+d^3),  \\
     \alpha_6 &= -2a^4d^2+12a^3bcd+8a^3c^3+6a^3cd^2-8a^2b^3d-42a^2b^2c^2-12a^2b^2d^2 +12a^2bc^2d\\
     & +6a^2bd^3-24a^2c^4 -12a^2c^2d^2 -2a^2d^4+48ab^4c +30ab^2c^3 +12ab^2cd^2+12abcd^3-8ac^3d^2\\
     & -16b^6-12b^4c^2-24b^4d^2+30b^3c^2d+8b^3d^3-12b^2c^4-42b^2c^2d^2+48bc^4d-16c^6, \\
     \alpha_8 &= \left(a^2d^2-6abcd+4ac^3+4b^3d-3b^2c^2\right)^2.
\end{align*}
\end{theorem}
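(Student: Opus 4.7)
The plan is to compute the $6 \times 6$ determinant in Equation~\eqref{eqn:char_det} and identify its expansion as a polynomial in $\lambda$.

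First, I would use the fact, recorded just after Definition~\ref{def:eigenvalue}, that for odd $m$ the eigenvalue system is invariant under $(\lambda, x) \mapsto (-\lambda, -x)$, so $\Psi_A(\lambda)$ is automatically a polynomial in $\lambda^2$. Inspection of the matrix in Equation~\eqref{eqn:char_det} shows that rows $1, 2, 4, 6$ have $\lambda$-degree at most $1$, row $3$ is constant, and row $5$ is the unique row containing a $\lambda^2$-entry (in position $(5,6)$). Summing the maximum $\lambda$-degrees across rows gives $\deg \Psi_A \le 6$, so only the coefficients $\alpha_2, \alpha_4, \alpha_6, \alpha_8$ of $\lambda^6, \lambda^4, \lambda^2, \lambda^0$ need to be determined.

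Second, I would compute $\alpha_8 = \Psi_A(0)$ directly. Setting $\lambda = 0$ kills many entries, and column $6$ is left with a single nonzero entry $-1$ in row $3$; cofactor expansion along that column reduces the problem to a $5 \times 5$ determinant which, after a single row swap, becomes block-diagonal with a $3 \times 3$ block in the first three columns and a $2 \times 2$ block in columns $4, 5$. A short calculation shows that both blocks evaluate (up to a sign) to the classical discriminant $\Delta := a^2 d^2 - 6abcd + 4ac^3 + 4b^3 d - 3b^2 c^2$ of the associated binary cubic, whence $\alpha_8 = \Delta^2$. This provides a conceptual consistency check: $\alpha_8 = 0$ exactly when the cubic has a repeated factor, i.e.\ when the tensor is degenerate in the sense of Theorem~\ref{thm:standard222}.

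Third, I would read off $\alpha_2$, the coefficient of $\lambda^6$, by restricting the permutation expansion to summands of maximal $\lambda$-degree. Such summands must select the $\lambda^2$-entry in position $(5,6)$, the $\lambda$-entries in positions $(1,4)$ and $(2,5)$, one $\lambda$-entry from each of rows $4$ and $6$ inside columns $1, 2, 3$, and a constant from row $3$ in column $1$ or $3$. There are exactly four such permutations; summing their signed contributions yields $\alpha_2 = -(a-3c)^2 - (3b-d)^2$. The middle coefficients $\alpha_4$ and $\alpha_6$ are then extracted in the same spirit by enumerating the permutations producing $\lambda^4$ and $\lambda^2$ terms, with the vanishing of odd-degree coefficients (forced by the symmetry of step one) serving as a running sanity check.

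The main obstacle will be the sheer scale of the symbolic bookkeeping for $\alpha_4$ and $\alpha_6$: as homogeneous forms of degree $4$ and $6$ in $(a, b, c, d)$ they contain many monomials, and hand collection is error-prone. In practice I would delegate the expansion to a computer algebra system and then cross-validate the output by specializing to the canonical tensors of Corollary~\ref{cor:standard}, whose eigenvalues are either zero or readily computed from their very simple shapes.
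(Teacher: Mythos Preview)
Your computational strategy aligns with the paper's, which simply says the coefficients are obtained ``by calculating the determinant in Equation~\eqref{eqn:char_det} directly''; your detailed extraction of $\alpha_8$ via cofactor expansion and of $\alpha_2$ via degree-maximal permutations goes beyond what the paper spells out.

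However, the paper's proof contains one ingredient you omit: the argument that the combination resultant \emph{is} the characteristic polynomial and not merely a polynomial whose roots contain the eigenvalues. The construction preceding the theorem only guarantees that eigenvalues of $A$ are roots of $\Psi_A$ (since the auxiliary equations $F_4,F_5,F_6$ vanish at all common zeros of System~\eqref{eqn:charac_nonlinear}), but $\det C=0$ could in principle admit extraneous roots. The paper closes this gap by an eigenvalue count: multiplying the first equation of~\eqref{eqn:charac_nonlinear} by $y$, the second by $x$, and subtracting eliminates $\lambda$ and yields a binary cubic in $(x:y)$ with three projective roots; combined with the normalization and the $\pm\lambda$ pairing this gives six eigenvalues generically, matching $\deg\Psi_A=6$, so no spurious factors survive. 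Your parity argument in step one also leans on this identification: the symmetry $(\lambda,x)\mapsto(-\lambda,-x)$ shows the eigenvalues themselves come in $\pm$ pairs, but that alone does not force an \emph{a priori} determinantal expression to be a polynomial in $\lambda^2$; you would need either the identification just described or a direct check that the matrix in~\eqref{eqn:char_det} has a row/column sign symmetry under $\lambda\mapsto-\lambda$.
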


\begin{proof}
First of all, if $\lambda_0$ is an eigenvalue of the tensor $A,$ then $\lambda_0$ is a root of the combination resultant $\Psi_{A}(\lambda)$ by the preceding argument. The coefficients of $\Psi_{A}(\lambda)$ are obtained by calculating the determinant in Equation~\eqref{eqn:char_det} directly as above. Secondly, by multiplying the first equation of \eqref{eqn:charac_nonlinear} by $y$, the second by $x$, and subtracting them, we obtain the following equation \[ax^2y+2bxy^2+cy^3=bx^3+2cx^2y+dxy^2.\] It is clear that a binary cubic has three zeros in terms of homogeneous coordinates. In cooperation with the third equation of \eqref{eqn:charac_nonlinear}, there are six eigenvectors and associated eigenvalues for $A$ in general. Finally, by the definition Equation \eqref{def:eigenvalue} of tensor eigenvalues, it is easy to observe that eigenvalues $\lambda_0$ and $-\lambda_0$ appear in pairs for odd order tensors, thus the characteristic polynomial is a polynomial in $\lambda^2$. In summary, since $\Psi_{A}(\lambda)$ contains all eigenvalues of $A$ and has the exact degree, it follows that $\Psi_{A}(\lambda)$ is the desired characteristic polynomial of the general $2 \times 2 \times 2$ real symmetric tensor $A$.
\end{proof}

\begin{remark}\label{rmk:charpoly} 
Cartwright and Sturmfels~\cite{cartwright2013number} presented an explicit characteristic polynomial~$\psi(\lambda) = C_2\lambda^6 + C_4\lambda^4 + C_6\lambda^2 + C_8$ for a general $2 \times 2 \times 2$ tensor and computed $C_2,C_8$ while $C_4, C_6$ remain unknown. If the $2 \times 2 \times 2$ tensor is required to be symmetric, then their $C_2, C_8$ coincide with ours. In addition, we determine the two previously unknown coefficients $C_4, C_6$ in this case.
\end{remark}

An explicit example for computing a characteristic polynomial and corresponding eigenvalues for a $2 \times 2 \times 2$ real symmetric tensor is given below.

\begin{example}\label{ex:equiv1}
\emph{Let $A = \left( \begin{pmatrix}
1 & 1 \\
1 & 1 \\
\end{pmatrix} , 
\begin{pmatrix}
1 & 1 \\
1 & 1 \\
\end{pmatrix} \right)$. According to Definition~\ref{def:eigenvalue}, its eigenvalues~$\lambda \in \R$ satisfy 
 \begin{equation*}
  \begin{cases} 
    x^2+2xy+y^2=\lambda x, \\
    x^2+2xy+y^2=\lambda y, \\
    x^2+y^2=1.
  \end{cases}
 \end{equation*}
The characteristic polynomial of $A$ is calculated using Theorem~\ref{thm:charac_poly222} as
\begin{equation*}
    \Psi_{A}(\lambda) = -8\lambda^6+64\lambda^4 = -8(\lambda^2)^2(\lambda^2-8).
\end{equation*}
Since the order of $A$ is odd, the roots of the characteristic polynomial in terms of $\lambda^2$ are 0 and 8, which shows that the eigenvalues of $A$ are $\lambda_{1}^2 = \lambda_{2}^2 = 0$ and $\lambda_{3}^2 = 8$.}
\end{example}

\section{Counterexamples to classification by eigenvalues}\label{sec:main}

This section presents three pairs of real third-order symmetric tensors that have identical eigenvalues but belong to different equivalence classes. Thus, it is evident that the eigenvalues are insufficient to classify real symmetric tensors into distinct equivalence classes. The pairs of two-, three- and four-dimensional examples are given in Subsections~\ref{sec:examples_222},~\ref{sec:examples_333} and~\ref{sec:examples_444}, respectively. We then generalize this result to all dimensions.

\subsection{$2 \times 2 \times 2$ symmetric tensors}\label{sec:examples_222}

\begin{example}\label{ex:222_1}
\emph{Let
$A_1 = \left( \begin{pmatrix}
1 & 0 \\
0 & -1 \\
\end{pmatrix} , 
\begin{pmatrix}
0 & -1 \\
-1 & 0 \\
\end{pmatrix} \right)$
be a $2 \times 2 \times 2$ real symmetric tensor. According to Definition~\ref{def:eigenvalue}, its eigenvalues~$\lambda \in \R$ satisfy 
 \begin{equation}\label{eqn:ex1_system}
  \begin{cases} 
    x^2-y^2=\lambda x, \\
    -2xy=\lambda y, \\
    x^2+y^2=1.
  \end{cases}
 \end{equation}
By Theorem~\ref{thm:charac_poly222}, we calculate the characteristic polynomial as
\begin{equation*}
    \Psi_{A_1}(\lambda) = -16\lambda^6+48\lambda^4-48\lambda^2+16 = -16(\lambda^2-1)^3.
\end{equation*}
Thus, the eigenvalues of $A_1$ are $\lambda_{1}^2=\lambda_{2}^2=\lambda_{3}^2=1$.}
\end{example}

\begin{example}\label{ex:222_2}
\emph{Let 
$A_2=\left( \begin{pmatrix}
1 & 0 \\
0 & \frac{1}{2} \\
\end{pmatrix} , 
\begin{pmatrix}
0 & \frac{1}{2} \\
\frac{1}{2} & 0 \\
\end{pmatrix} \right)$
be a $2 \times 2 \times 2$ real symmetric tensor. According to Definition~\ref{def:eigenvalue}, its eigenvalues~$\lambda \in \R$ are the solutions of
 \begin{equation}\label{eqn:ex2_system}
  \begin{cases} 
    x^2+\frac{1}{2}y^2=\lambda x, \\
    xy=\lambda y, \\
    x^2+y^2=1.
  \end{cases}
 \end{equation}
The characteristic polynomial is then calculated using Theorem~\ref{thm:charac_poly222} as
\begin{equation*}
    \Psi_{A_2}(\lambda) = -\frac{1}{4}\lambda^6+\frac{3}{4}\lambda^4-\frac{3}{4}\lambda^2+\frac{1}{4}=-\frac{1}{4}(\lambda^2-1)^3.
\end{equation*}
Hence, the eigenvalues of $A_2$ are $\lambda_{1}^2=\lambda_{2}^2=\lambda_{3}^2=1$.}
\end{example}

Though $A_1$ and $A_2$ share the same eigenvalues, we have the following observation.

\begin{proposition}\label{prop:tensor222}
The tensors $A_1$ and $A_2$ are not equivalent.
\end{proposition}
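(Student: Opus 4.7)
The plan is to invoke Harrison's center theorem, specifically Theorem~\ref{thm:center}(2): equivalent homogeneous polynomials must have isomorphic centers as $\R$-algebras. Since equivalence classes of symmetric tensors correspond bijectively to those of their associated cubics, it suffices to exhibit an algebra-invariant that separates the cubics $f_1, f_2$ attached to $A_1, A_2$.

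First, I would simply read off the associated cubics: $f_1(x,y) = x^3 - 3xy^2$ and $f_2(x,y) = x^3 + \tfrac{3}{2}xy^2$. The polynomial $f_1$ is already the canonical form appearing in Theorem~\ref{thm:standard222}(4), so $Z(f_1)\cong\C$ is immediate with no further computation.

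Second, I would compute $Z(f_2)$ by substituting $(a,b,c,d)=(1,0,\tfrac{1}{2},0)$ into System~\eqref{eqn:center222}; the two equations collapse to $x_{22}=x_{11}$ and $x_{21}=2x_{12}$, so
\[
Z(f_2)=\left\{\begin{pmatrix} s & t \\ 2t & s \end{pmatrix} : s,t\in\R\right\}.
\]
Choosing the generator $X=\begin{pmatrix} 0 & 1 \\ 2 & 0 \end{pmatrix}$ one checks $X^2=2I$, so $Z(f_2)\cong \R[X]/(X^2-2)$. Because $X^2-2$ splits over $\R$ as $(X-\sqrt 2)(X+\sqrt 2)$, the Chinese Remainder Theorem gives $Z(f_2)\cong \R\times\R$.

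Finally, $\C$ is a field whereas $\R\times\R$ has nontrivial zero divisors, so the two centers are non-isomorphic as $\R$-algebras; Theorem~\ref{thm:center}(2) then forces $f_1\not\sim f_2$, and hence $A_1\not\sim A_2$. No step presents a genuine obstacle; the only subtlety is the algebra structure of $Z(f_2)$, which hinges on the sign of the quadratic $X^2-2$ being positive (so that it splits over $\R$) rather than negative (as is essentially the case for $f_1$, producing a copy of $\C$ instead).
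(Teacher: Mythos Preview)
Your proof is correct and follows essentially the same approach as the paper: both identify the cubics $f_1,f_2$, note that $f_1$ is already the canonical form of Theorem~\ref{thm:standard222}(4), compute $Z(f_2)=\{\begin{psmallmatrix} s & t \\ 2t & s\end{psmallmatrix}\}\cong\R\times\R$, and conclude non-equivalence via the center theory. The only cosmetic difference is that the paper routes the final step through Theorem~\ref{thm:standard222} (distinct canonical forms), whereas you invoke Theorem~\ref{thm:center}(2) directly on the non-isomorphism $\C\not\cong\R\times\R$; your explicit justification of $Z(f_2)\cong\R[X]/(X^2-2)\cong\R\times\R$ is a nice touch that the paper leaves implicit.
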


\begin{proof}
The tensor $A_1$ is associated with the polynomial~$f_1(x,y)=x^3-3xy^2$, while the tensor $A_2$ is associated with the polynomial $f_2(x,y)=x^3+\frac{3}{2}xy^2$. It is clear that $f_1$ is the canonical form of type (4) in Theorem~\ref{thm:standard222}. As for $f_2$, its Hessian matrix is
\begin{equation*}
    H_{f_2}=\left(
    \begin{array}{cc}
    		6x & 3y \\
    		3y & 3x \\
    	\end{array}
    	\right),
\end{equation*}
and we compute the center by Equation~\eqref{eqn:center} as 
\begin{equation*}
    Z(f_2)=\left\{ \begin{pmatrix}
        s & t \\ 2t & s
    \end{pmatrix} \vline s,t \in \R \right\},
\end{equation*}
which implies that $Z(f_{2})\cong \R \times \R$. Thus, $f_2 \sim x^3+y^3$ by Theorem~\ref{thm:standard222}. Therefore, $f_1$ and $f_2$ are not equivalent, which suggests that their associated tensors~$A_1$ and $A_2$ belong to different equivalence classes in Corollary~\ref{cor:standard}. 
\end{proof}

\subsection{$3 \times 3 \times 3$ symmetric tensors}\label{sec:examples_333}


Now we continue to examine examples with higher dimensions. For $n>2$, we construct a $n \times n \times n$ symmetric tensor $A=(a_{ijk})_{1\le i,j,k\le n}$ as a direct sum of a $2 \times 2 \times 2$ symmetric tensor $B=(b_{ijk})_{1\le i,j,k\le 2}$ and a third-order $(n-2)$-dimensional unit tensor $I_{(n-2)}$, that is, $A=B\oplus I_{(n-2)}$. 
Note that an $m$-th order $n$-dimensional tensor is called an $m$-th \textit{unit tensor} if its entries satisfy
 \begin{equation*}
  I_{i_1 \ldots i_m} = 
  \begin{cases} 
        1, \quad \text{if } i_1 = \cdots = i_m, \\
        0, \quad \text{otherwise},
  \end{cases}
 \end{equation*}
for $ i_1, \ldots, i_m = 1, \ldots, n$. Thus, $a_{ijk}=b_{ijk}$ for $1 \le i,j,k \le 2$, $a_{iii}=1$ for $3 \le i \le n$ and $a_{ijk}=0$ elsewhere. In the rest of this section, we denote an $n\times n \times n$ unit tensor as $I_{(n)}$ for clarity.

\begin{example}\label{ex:333_1}
\emph{We construct the $3 \times 3 \times 3$ real symmetric tensor $A_3$ as $A_3=A_1 \oplus I_{(1)}$, where
\begin{equation*}
A_3=\left( \begin{pmatrix}
1 & 0 & 0\\
0 & -1 & 0 \\
0 & 0 & 0\\
\end{pmatrix} , \begin{pmatrix}
0 & -1 & 0\\
-1 & 0 & 0 \\
0 & 0 & 0\\
\end{pmatrix},
\begin{pmatrix}
0 & 0 & 0\\
0 & 0 & 0 \\
0 & 0 & 1\\
\end{pmatrix} \right).
\end{equation*}
According to Definition~\ref{def:eigenvalue}, we obtain eigenvalues $\lambda \in \R$ by solving
\begin{equation}\label{eqn:ex3_system}
  \begin{cases} 
    x^2-y^2=\lambda x, \\
    -2xy=\lambda y, \\
    z^2 = \lambda z, \\
    x^2+y^2+z^2=1.
  \end{cases}
 \end{equation}}

\emph{If $z = 0$, System~\eqref{eqn:ex3_system} is equivalent to System~\eqref{eqn:ex1_system} and so we obtain three eigenvalues $\lambda_{1}^2=\lambda_{2}^2=\lambda_{3}^2=1$.
If $z \ne 0$, it is easy to see that $z = \lambda$ and System~\eqref{eqn:ex3_system} reduces to a system with only two variables. Note that this new system is the same as System~\eqref{eqn:ex1_system} except for a different normalization equation $x^2+y^2+\lambda^2=1$. We calculate the combination resultant to get $16(\lambda^2-1)(2\lambda^2-1)^3$ which offers four other eigenvalues as $\lambda_4^2=1, \ \lambda_5^2=\lambda_6^2=\lambda_7^2=\frac{1}{2}.$ }
\end{example}

\begin{example}\label{ex:333_2}
\emph{We construct the $3 \times 3 \times 3$ real symmetric tensor $A_4$ as $A_4=A_2 \oplus I_{(1)}$, where
\begin{equation*}
A_4=\left( \begin{pmatrix}
1 & 0 & 0\\
0 & \frac{1}{2} & 0 \\
0 & 0 & 0\\
\end{pmatrix} , \begin{pmatrix}
0 & \frac{1}{2} & 0\\
\frac{1}{2} & 0 & 0 \\
0 & 0 & 0\\
\end{pmatrix},
\begin{pmatrix}
0 & 0 & 0\\
0 & 0 & 0 \\
0 & 0 & 1\\
\end{pmatrix} \right).
\end{equation*}
According to Definition~\ref{def:eigenvalue}, its eigenvalues $\lambda \in \R$ satisfy 
 \begin{equation}\label{eqn:ex4_system}
  \begin{cases} 
    x^2+\frac{1}{2}y^2=\lambda x, \\
    xy=\lambda y, \\
    z^2 = \lambda z, \\
    x^2+y^2+z^2=1.
  \end{cases}
 \end{equation}}

\emph{If $z = 0$, System~\eqref{eqn:ex4_system} is equivalent to System~\eqref{eqn:ex2_system} and this provides three eigenvalues $\lambda_{1}^2=\lambda_{2}^2=\lambda_{3}^2=1$.
If $z \ne 0$, then $z = \lambda$ and System~\eqref{eqn:ex4_system} only has two variables. By computing the corresponding combination resultant as $\frac{1}{4}(\lambda^2-1)(2\lambda^2-1)^3$ and we get four other eigenvalues as $\lambda_4^2=1, \ \lambda_5^2=\lambda_6^2=\lambda_7^2=\frac{1}{2}.$ } 
\end{example}

The above examples show that the eigenvalues of $A_3$ and $A_4$ are identical, however we have

\begin{proposition}\label{prop:tensor333}
The tensors $A_3$ and $A_4$ are not equivalent.
\end{proposition}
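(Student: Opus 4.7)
The plan is to adapt the center-theoretic argument used in Proposition~\ref{prop:tensor222} to the three-dimensional setting, exploiting the direct-sum structure of the associated polynomials via Theorem~\ref{thm:center}(3). First I would write down the homogeneous cubics $f_3$ and $f_4$ associated with $A_3$ and $A_4$. From the block structure $A_3 = A_1 \oplus I_{(1)}$ and $A_4 = A_2 \oplus I_{(1)}$, these take the form
\begin{equation*}
    f_3(x,y,z) = x^3 - 3xy^2 + z^3, \qquad f_4(x,y,z) = x^3 + \tfrac{3}{2}xy^2 + z^3,
\end{equation*}
which are sums of polynomials in the disjoint variable sets $\{x,y\}$ and $\{z\}$.

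Next, I would apply Theorem~\ref{thm:center}(3) to obtain $Z(f_3) \cong Z(x^3-3xy^2)\times Z(z^3)$ and $Z(f_4) \cong Z(x^3+\tfrac{3}{2}xy^2)\times Z(z^3)$. The one-variable center $Z(z^3)$ sits inside $\R^{1\times 1}$, where the defining equation of the center is trivial, so $Z(z^3)\cong \R$. For the two-variable pieces I would quote what was already established: from Theorem~\ref{thm:standard222}(4) we have $Z(x^3-3xy^2)\cong \C$, and the center computation carried out in the proof of Proposition~\ref{prop:tensor222} shows $Z(x^3+\tfrac{3}{2}xy^2)\cong \R\times\R$. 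Combining these identifications gives
\begin{equation*}
    Z(f_3) \cong \C \times \R, \qquad Z(f_4) \cong \R \times \R \times \R.
\end{equation*}

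Finally, I would argue that these two real algebras are not isomorphic, hence by Theorem~\ref{thm:center}(2) the polynomials $f_3$ and $f_4$ are inequivalent, which forces $A_3$ and $A_4$ into distinct equivalence classes. The cleanest invariant to use is the set of idempotents: $\R\times\R\times\R$ has $2^3 = 8$ idempotents, whereas $\C \times \R$ has only $4$, because $\C$ has exactly two idempotents $0$ and $1$. Alternatively one can invoke the fact that $\C\times\R$ contains an element whose square is the negative of a nontrivial idempotent, which has no counterpart in $\R^3$. The main (and essentially only) obstacle is making sure that the equivalence $f_4 \sim x^3+y^3+z^3$ really does follow from the two-variable result plus the disjoint-variable decomposition, but since equivalences in the first two variables extend by the identity in the third, this is immediate.
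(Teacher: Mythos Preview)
Your proposal is correct and follows essentially the same approach as the paper: identify the associated cubics $f_3,f_4$, use Theorem~\ref{thm:center} to obtain $Z(f_3)\cong\C\times\R$ and $Z(f_4)\cong\R\times\R\times\R$, and conclude non-equivalence from the non-isomorphism of these center algebras. Your write-up is in fact more detailed than the paper's, which simply asserts the center computations ``by direct calculation'' without spelling out the idempotent-count argument.
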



\begin{proof}
Note that $A_3$ is associated to $f_3(x,y,z)=x^3-3xy^2+z^3$ and $A_4$ is associated to $f_4(x,y,z)=x^3+\frac{3}{2}xy^2+z^3$. According to Theorem \ref{thm:center}, it is easy to get $Z(f_3) \cong \C \times \R$ and $Z(f_4) \cong \R \times \R \times \R$ by direct calculation. It follows that the symmetric tensors $A_3$ and $A_4$ are not equivalent. 
\end{proof}

\subsection{$4 \times 4 \times 4$ symmetric tensors}\label{sec:examples_444}

We present two examples in four dimensions below.

\begin{example}\label{ex:444_1}
\emph{We construct the $4 \times 4 \times 4$ real symmetric tensor $A_5$ as $A_5=A_1\oplus I_{(2)}$, where
\begin{equation*}
A_5=\left( \begin{pmatrix}
1 & 0 & 0 & 0\\
0 & -1 & 0 & 0\\
0 & 0 & 0 & 0 \\
0 & 0 & 0 & 0 \\
\end{pmatrix} , \begin{pmatrix}
0 & -1 & 0 & 0 \\
-1 & 0 & 0 & 0 \\
0 & 0 & 0 & 0 \\
0 & 0 & 0 & 0 \\
\end{pmatrix}, \begin{pmatrix}
0 & 0 & 0 & 0 \\
0 & 0 & 0 & 0 \\
0 & 0 & 1 & 0 \\
0 & 0 & 0 & 0 \\
\end{pmatrix}, \begin{pmatrix}
0 & 0 & 0 & 0 \\
0 & 0 & 0 & 0 \\
0 & 0 & 0 & 0 \\
0 & 0 & 0 & 1 \\
\end{pmatrix} \right).
\end{equation*}
According to Definition~\ref{def:eigenvalue}, its eigenvalues $\lambda \in \R$ are computed by solving 
 \begin{equation}\label{eqn:ex5_system}
  \begin{cases} 
    x^2-y^2=\lambda x, \\
    -2xy=\lambda y, \\
    z^2 = \lambda z, \\
    s^2 = \lambda s, \\
    x^2+y^2+z^2+s^2=1.
  \end{cases}
 \end{equation}}

\emph{If $z = 0, s = 0$, System~\eqref{eqn:ex5_system} is equivalent to System~\eqref{eqn:ex1_system} and we get three eigenvalues $\lambda_{1}^2=\lambda_{2}^2=\lambda_{3}^2=1$. If $z \ne 0, s = 0$ or $z = 0, s \ne 0$, it is obvious that System~\eqref{eqn:ex5_system} is equivalent to System~\eqref{eqn:ex3_system} with $z=\lambda$. Thus, each case will give us four eigenvalues, that is, $\lambda_4^2=\lambda_5^2=1, \ \lambda_6^2=\cdots=\lambda_{11}^2=\frac{1}{2}.$ If $z \ne 0, s \ne 0$, we have $z = s = \lambda$ and System~\eqref{eqn:ex5_system} is reduced to a system in two variables. Moreover, the normalization equation becomes $x^2+y^2-2\lambda^2=1$. We use the combination resultant to compute another four eigenvalues as $\lambda_{12}^2=\frac{1}{2},\lambda_{13}^2=\lambda_{14}^2=\lambda_{15}^2=\frac{1}{3}$.}
\end{example}

\begin{example}\label{ex:444_2}
\emph{We construct the $4 \times 4 \times 4$ real symmetric tensor $A_6$ as $A_6=A_2\oplus I_{(2)}$, where
\begin{equation*}
A_6=\left( \begin{pmatrix}
1 & 0 & 0 & 0\\
0 & \frac{1}{2} & 0 & 0\\
0 & 0 & 0 & 0 \\
0 & 0 & 0 & 0 \\
\end{pmatrix} , \begin{pmatrix}
0 & \frac{1}{2} & 0 & 0 \\
\frac{1}{2} & 0 & 0 & 0 \\
0 & 0 & 0 & 0 \\
0 & 0 & 0 & 0 \\
\end{pmatrix}, \begin{pmatrix}
0 & 0 & 0 & 0 \\
0 & 0 & 0 & 0 \\
0 & 0 & 1 & 0 \\
0 & 0 & 0 & 0 \\
\end{pmatrix}, \begin{pmatrix}
0 & 0 & 0 & 0 \\
0 & 0 & 0 & 0 \\
0 & 0 & 0 & 0 \\
0 & 0 & 0 & 1 \\
\end{pmatrix} \right).
\end{equation*}
According to Definition~\ref{def:eigenvalue}, we obtain eigenvalues $\lambda \in \R$ of $A_6$ by solving 
 \begin{equation}\label{eqn:ex6_system}
  \begin{cases} 
    x^2+\frac{1}{2}y^2=\lambda x, \\
    xy=\lambda y, \\
    z^2 = \lambda z, \\
    s^2 = \lambda s, \\
    x^2+y^2+z^2+s^2=1.
  \end{cases}
 \end{equation}}

\emph{If $z = 0, s = 0$, System~\eqref{eqn:ex5_system} is equivalent to System~\eqref{eqn:ex1_system}, which produces three eigenvalues $\lambda_{1}^2=\lambda_{2}^2=\lambda_{3}^2=1$. If $z \ne 0, s = 0$ or $z = 0, s \ne 0$, it is obvious that System~\eqref{eqn:ex5_system} is equivalent to System~\eqref{eqn:ex3_system} with $z=\lambda$. Then we obtain eight eigenvalues in total as $\lambda_4^2=\lambda_5^2=1, \ \lambda_6^2=\cdots=\lambda_{11}^2=\frac{1}{2}$. If $z \ne 0, s \ne 0$, it follows that $z = s = \lambda$ and System~\eqref{eqn:ex5_system} only has two variables. Using the combination resultant, we obtain the last four eigenvalues as $\lambda_{12}^2=\frac{1}{2},\lambda_{13}^2=\lambda_{14}^2=\lambda_{15}^2=\frac{1}{3}$.}
\end{example}

We proceed to extend our results to four dimensions.

\begin{proposition}\label{prop:tensor444}
The tensors $A_5$ and $A_6$ are not equivalent.
\end{proposition}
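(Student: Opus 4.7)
The plan is to reuse the center-theoretic argument that proved Propositions~\ref{prop:tensor222} and~\ref{prop:tensor333}, now in the four-variable setting. First I would pass from tensors to their associated homogeneous polynomials, noting that $A_5$ and $A_6$ correspond respectively to
\begin{equation*}
    f_5(x,y,z,s) = x^3 - 3xy^2 + z^3 + s^3, \qquad f_6(x,y,z,s) = x^3 + \tfrac{3}{2}xy^2 + z^3 + s^3.
\end{equation*}
By the tensor-polynomial correspondence recorded in Section~\ref{sec:tensor}, it suffices to show $f_5 \not\sim f_6$.

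The key observation is that each $f_i$ is a direct sum of polynomials in disjoint sets of variables, namely the binary cubic in $(x,y)$ together with the two one-variable cubics $z^3$ and $s^3$. Applying item (3) of Theorem~\ref{thm:center} then decomposes the centers as
\begin{equation*}
    Z(f_5) \cong Z(x^3-3xy^2) \times Z(z^3) \times Z(s^3), \qquad Z(f_6) \cong Z(x^3+\tfrac{3}{2}xy^2) \times Z(z^3) \times Z(s^3).
\end{equation*}
A quick direct computation from Equation~\eqref{eqn:center} shows $Z(z^3) \cong Z(s^3) \cong \R$ (every $1\times 1$ matrix is trivially symmetric). For the binary cubic factors, the computations carried out in the proof of Proposition~\ref{prop:tensor222} already give $Z(x^3-3xy^2) \cong \C$ and $Z(x^3+\tfrac{3}{2}xy^2) \cong \R\times\R$.

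Combining these identifications yields
\begin{equation*}
    Z(f_5) \cong \C \times \R \times \R, \qquad Z(f_6) \cong \R \times \R \times \R \times \R.
\end{equation*}
These two commutative $\R$-algebras are not isomorphic: the first contains an element whose minimal polynomial over $\R$ is irreducible of degree two (coming from the $\C$ factor), whereas every element of the second is annihilated by a polynomial that splits completely over $\R$. By item (2) of Theorem~\ref{thm:center}, non-isomorphic centers force $f_5 \not\sim f_6$, and therefore $A_5$ and $A_6$ belong to distinct equivalence classes. I do not expect any real obstacle in this argument; the only subtlety worth stating carefully is that $Z(z^3) \cong \R$ so that the $\C$ factor survives intact in $Z(f_5)$ and provides the algebra-level invariant that distinguishes the two tensors.
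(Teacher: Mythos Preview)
Your argument is correct and follows essentially the same center-theoretic route as the paper: the paper's proof simply points back to Proposition~\ref{prop:tensor333}, and the general case in Theorem~\ref{prop:tensornnn} carries out exactly the computation you describe, obtaining $Z(f_5)\cong \C\times\R\times\R$ and $Z(f_6)\cong \R^4$ via Theorem~\ref{thm:center} and concluding non-equivalence from the non-isomorphism of centers.
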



\begin{proof}
The proof is similar to the proof of Proposition~\ref{prop:tensor333}.
\end{proof}

We have provided explicit examples of two-, three-, and four-dimensional real symmetric tensors that have the same eigenvalues but belong to different equivalence classes. This result is readily generalized to higher dimensions as follows. 

\begin{theorem}\label{prop:tensornnn}
Let $T_{i} = A_i\oplus I_{(n-2)}$ for $i=1,2$. Then $T_1$ and $T_2$ have the same eigenvalues, but they are not equivalent.
\end{theorem}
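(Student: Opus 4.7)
The plan is to split the theorem into its two claims---equal eigenvalues and distinct equivalence classes---and settle each by lifting the two-, three-, and four-dimensional arguments of Subsections~\ref{sec:examples_222}--\ref{sec:examples_444} to arbitrary $n$. The associated cubics are written as sums in pairwise disjoint variable sets,
\[
f_{T_1} = (x^3 - 3xy^2) + z_1^3 + \cdots + z_{n-2}^3, \qquad f_{T_2} = (x^3 + \tfrac{3}{2}xy^2) + z_1^3 + \cdots + z_{n-2}^3,
\]
so that both tensors sit naturally inside the framework of the preceding sections.

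For the non-equivalence I would invoke Harrison's center theory. Iterating Theorem~\ref{thm:center}(3) over the direct summands and noting that each single-variable factor $Z(z_j^3)$ is just $\R$ gives $Z(f_{T_i}) \cong Z(f_i) \times \R^{n-2}$. By Theorem~\ref{thm:standard222}(4), $Z(x^3 - 3xy^2) \cong \C$, and by the center calculation $Z(x^3 + \tfrac{3}{2}xy^2) \cong \R \times \R$ carried out in the proof of Proposition~\ref{prop:tensor222}, we conclude $Z(f_{T_1}) \cong \C \times \R^{n-2}$ and $Z(f_{T_2}) \cong \R^n$. These $\R$-algebras are not isomorphic: the first has $n - 1$ primitive idempotents, the second has $n$. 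Theorem~\ref{thm:center}(2) then forbids equivalence of $f_{T_1}$ and $f_{T_2}$, hence of $T_1$ and $T_2$.

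For the equality of eigenvalues I would decouple the system in Definition~\ref{def:eigenvalue}. The trailing $n-2$ equations $z_j^2 = \lambda z_j$ force each $z_j$ into $\{0, \lambda\}$. For a subset $S \subseteq \{1, \ldots, n-2\}$ with $|S| = k$ on which $z_j = \lambda$, the residual problem becomes $A_i(x, y)^2 = \lambda(x, y)$ with modified normalization $x^2 + y^2 = 1 - k\lambda^2$. This splits into a ``purely $z$'' branch ($x = y = 0$, so $\lambda^2 = 1/k$ when $k \geq 1$), whose contribution depends only on $k$ and not on $i$, and a ``mixed'' branch which, after the rescaling $(x, y, \lambda) = \sqrt{1 - k\lambda^2}\,(\tilde x, \tilde y, \tilde\lambda)$, becomes the standard eigenvalue problem for $A_i$ governed by $\Psi_{A_i}(\tilde\lambda)$. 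Since Examples~\ref{ex:222_1} and~\ref{ex:222_2} exhibit $\Psi_{A_1}$ and $\Psi_{A_2}$ as nonzero scalar multiples of one another, the mixed branches produce identical $\lambda^2$-multisets for the two tensors. Summing over the $2^{n-2}$ choices of $S$ then yields identical eigenvalue multisets for $T_1$ and $T_2$, generalizing the bookkeeping of Examples~\ref{ex:333_1}--\ref{ex:444_2}.

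The principal obstacle is the multiplicity accounting in the mixed branch: the rescaling introduces factors of $\sqrt{1 - k\lambda^2}$ that must be cleared to produce a clean polynomial identity between the combination resultants of the reduced systems. A convenient workaround is to compute the reduced combination resultant directly and note that its coefficients depend polynomially on the entries of $A_i$, so the proportionality of $\Psi_{A_1}$ and $\Psi_{A_2}$ propagates uniformly to every $k$---already visible in Examples~\ref{ex:333_1} and~\ref{ex:333_2}, where the reduced resultants $16(\lambda^2 - 1)(2\lambda^2 - 1)^3$ and $\tfrac{1}{4}(\lambda^2 - 1)(2\lambda^2 - 1)^3$ share identical roots with identical multiplicities. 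The non-equivalence half of the argument is by contrast quite short, relying only on the algebraic distinction $\C \not\cong \R \times \R$ already exploited in Proposition~\ref{prop:tensor222}.
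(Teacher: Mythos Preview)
Your proposal is correct and follows essentially the same approach as the paper: Harrison centers via Theorem~\ref{thm:center} distinguish $f_{T_1}$ from $f_{T_2}$ (giving $\C\times\R^{n-2}$ versus $\R^n$), and the eigenvalue system decouples over the $z_j$-coordinates so that each branch reduces to the two-dimensional problem for $A_i$. Your explicit rescaling of the mixed branch and your attention to multiplicity are in fact more detailed than the paper's own argument, which simply appeals to the $3\times 3\times 3$ and $4\times 4\times 4$ computations as templates.
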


\begin{proof}
Recall that the $A_1$ and~$A_2$ in Examples \ref{ex:222_1} and \ref{ex:222_2} are $2 \times 2 \times 2$ real symmetric tensors that share identical eigenvalues but are not equivalent. Similar to the $3 \times 3 \times 3$ and $4 \times 4 \times 4$ cases, the eigenvalues of $T_1$ and $T_2$ can be obtained by decomposing the corresponding System~\eqref{def:eigenvalue} and considering the eigenvalues of $A_i \oplus I_{(k)}$ for all $0 \le k \le n-2$. Hence, if $A_1$ and $A_2$ have identical eigenvalues, then so do $T_1$ and $T_2$. 

The tensors $T_1$ and $T_2$ are associated with the polynomials $f_1(x_1,\ldots,x_n) = x_1^3-3x_1x_2^2+x_3^3+\ldots+x_n^3$ and $f_2(x_1,\ldots,x_n) = x_1^3+\frac{3}{2}x_1x_2^2+x_3^3+\ldots+x_n^3$, respectively. It follows from Theorem~\ref{thm:center} that $Z(f_1) \cong \C \times \R \times \cdots \times \R$  ($n-2$ times) and $Z(f_2) \cong \R \times \cdots \times \R$ ($n$ times). Since the centers of $f_1$ and $f_2$  are not isomorphic, their associated tensors $T_1$ and $T_2$ are not equivalent.
\end{proof}

\begin{remark}
In this section, we take advantage of the direct sum to construct examples. For a direct sum tensor, it is clear by the definition that its eigenvalues contain the union of the direct summands'. However, there are many more than that, see e.g. the case of the unit tensor \cite{cartwright2013number}. It seems of independent interest to fully uncover all the eigenvalues or the eigenpairs of a direct sum tensor from those of its direct summands. 
\end{remark}

\section*{Use of AI tools declaration}
The authors declare they have not used Artificial Intelligence (AI) tools in the creation of this article.

\section*{Acknowledgements}
L. Fang was partially supported by the Natural Science Foundation of Xiamen (Grant No. 3502Z202371014).

H.-L. Huang was partially supported by the Key Program of the Natural Science Foundation of Fujian Province (Grant No. 2024J02018) and the National Natural Science Foundation of China (Grant No. 12371037).

Y. Ye was partially supported by the National Key R\&D Program of China (Grant No. 2024YFA1013802), the National Natural Science Foundation of China (Grant Nos. 12131015 and 12371042), and the Innovation Program for Quantum Science and Technology (Grant No. 2021ZD0302902).

\section*{Conflict of interest}
The authors declare no conflicts of interest.

\begin{spacing}{.88}
\setlength{\bibsep}{2.pt}
\bibliographystyle{abbrvnat}
\bibliography{tensor_eigenvalue}

@article{huang2021diagonalizable,
	author = {Hua-Lin Huang and Huajun Lu and Yu Ye and Chi Zhang},
	title = {Diagonalizable higher degree forms and symmetric tensors},
	journal = {Linear Algebra and its Applications},
	volume = {613},
	pages = {151-169},
	year = {2021},
	doi = {10.1016/j.laa.2020.12.018}
}

@article{huang2022centres,
  author = {Hua-Lin Huang and Huajun Lu and Yu Ye and Chi Zhang},
  title = {On centres and direct sum decompositions of higher degree forms},
  journal = {Linear and Multilinear Algebra},
  volume = {70},
  number = {22},
  pages = {7290-7306},
  year = {2022},
  publisher = {Taylor & Francis},
  doi = {10.1080/03081087.2021.1985057}
}

@article{qi2005eigenvalues,
  title={Eigenvalues of a real supersymmetric tensor},
  author={Qi, Liqun},
  journal={Journal of Symbolic Computation},
  volume={40},
  number={6},
  pages={1302--1324},
  year={2005},
  publisher={Elsevier},
  doi = {10.1016/j.jsc.2005.05.007}
}

@article{qi2006rank,
  title={Rank and eigenvalues of a supersymmetric tensor, the multivariate homogeneous polynomial and the algebraic hypersurface it defines},
  author={Qi, Liqun},
  journal={Journal of Symbolic Computation},
  volume={41},
  number={12},
  pages={1309--1327},
  year={2006},
  publisher={Elsevier},
  doi = {10.1016/j.jsc.2006.02.011}
}

@article{qi2007eigenvalues,
  title={Eigenvalues and invariants of tensors},
  author={Qi, Liqun},
  journal={Journal of Mathematical Analysis and Applications},
  volume={325},
  number={2},
  pages={1363--1377},
  year={2007},
  publisher={Elsevier},
  doi={10.1016/j.jmaa.2006.02.071}
}

@article{cartwright2013number,
  title={The number of eigenvalues of a tensor},
  author={Cartwright, Dustin and Sturmfels, Bernd},
  journal={Linear Algebra and its Applications},
  volume={438},
  number={2},
  pages={942--952},
  year={2013},
  publisher={Elsevier},
  doi={10.1016/j.laa.2011.05.040}
}

@article{li2013characteristic,
  title={E-characteristic polynomials of tensors},
  author={Li, An-Min and Qi, Liqun and Zhang, Bin},
  journal={Communications in Mathematical Sciences},
  volume={11},
  number={1},
  pages={33--53},
  year={2013},
  publisher={International Press of Boston, Inc.},
  doi={10.4310/CMS.2013.v11.n1.a2}
}

@article{hu2013characteristic,
  title={The {E}-characteristic polynomial of a tensor of dimension 2},
  author={Hu, Shenglong and Qi, Liqun},
  journal={Applied Mathematics Letters},
  volume={26},
  number={2},
  pages={225--231},
  year={2013},
  publisher={Elsevier},
  doi={10.1016/j.aml.2012.08.017}
}

@article{harrison1975grothendieck,
	title = {A grothendieck ring of higher degree forms},
	journal = {Journal of Algebra},
	volume = {35},
	number = {1},
	pages = {123-138},
	year = {1975},
	doi = {10.1016/0021-8693(75)90039-3},
	author = {David K Harrison}
}

@article{ni2007degree,
  title={The degree of the {E-characteristic} polynomial of an even order tensor},
  author={Ni, Guyan and Qi, Liqun and Wang, Fei and Wang, Yiju},
  journal={Journal of Mathematical Analysis and Applications},
  volume={329},
  number={2},
  pages={1218--1229},
  year={2007},
  publisher={Elsevier}
}

@book{cox2005using,
  title={Using Algebraic Geometry},
  author={Cox, David A and Little, John and O'shea, Donal},
  volume={185},
  year={2005},
  publisher={Springer Science \& Business Media}
}

@book{qi2018tensor,
  title={Tensor Eigenvalues and Their Applications},
  author={Qi, Liqun and Chen, Haibin and Chen, Yannan},
  volume={39},
  year={2018},
  publisher={Springer}
}

@book{yang1996nonlinear,
  title={Nonlinear algebraic equation system and automated theorem proving},
  author={Yang, Lu and Jing-Zhong Zhang and Xiao-Rong Hou},
  year={1996},
  publisher={Shanghai Scientific and Technological Education Publishing House}
}

@article{zhou1999preliminary,
  title={The preliminary of combination resultant approach on polynomial equation system},
  author={Zhou, Jia-Nong},
  journal={Journal of Sichuan University (Natural Science Edition)},
  volume={36},
  number={2},
  pages={206--210},
  year={1999}
}

@article{huang2025solving,
  title={Solving algebraic equations by completing powers},
  author={Huang, Hua-Lin and Ruan, Shengyuan and Xu, Xiaodan and Ye, Yu},
  journal={The American Mathematical Monthly},
  volume={132},
  number={3},
  pages={218--236},
  year={2025},
  publisher={Taylor \& Francis},
  doi={10.1080/00029890.2024.2421145}
}

@article{huang2022harrison,
  title={Harrison center and products of sums of powers},
  author={Huang, Hua-Lin and Liao, Lili and Lu, Huajun and Ye, Yu and Zhang, Chi},
  journal={Communications in Mathematics and Statistics},
  year={2023},
  publisher={Springer},
  doi={10.1007/s40304-023-00367-1}
}

@book{landsberg2017geometry,
  title={Geometry and Complexity Theory},
  author={Landsberg, Joseph M},
  volume={169},
  year={2017},
  publisher={Cambridge University Press},
  doi={10.1017/9781108183192}
}

@article{fang2025simultaneous,
  title={Simultaneous direct sum decompositions of several multivariate polynomials},
  author={Fang, Lishan and Huang, Hua-Lin and Liao, Lili},
  journal={Linear Algebra and its Applications},
  volume={724},
  pages={320--335},
  year={2025},
  publisher={Elsevier},
  doi={10.1016/j.laa.2025.06.022}
}

@inproceedings{acuaviva2023minimal,
  title={The minimal canonical form of a tensor network},
  author={Acuaviva, Arturo and Makam, Visu and Nieuwboer, Harold and P{\'e}rez-Garc{\'\i}a, David and Sittner, Friedrich and Walter, Michael and Witteveen, Freek},
  booktitle={2023 IEEE 64th Annual Symposium on Foundations of Computer Science (FOCS)},
  pages={328--362},
  year={2023},
  organization={IEEE},
  doi={10.1109/FOCS57990.2023.00027}
}

@article{dur2000three,
  title={Three qubits can be entangled in two inequivalent ways},
  author={D{\"u}r, Wolfgang and Vidal, Guifre and Cirac, J Ignacio},
  journal={Physical Review A},
  volume={62},
  number={6},
  pages={062314},
  year={2000},
  publisher={APS},
  doi={10.1103/PhysRevA.62.062314}
}

@book{horn2012matrix,
  title={Matrix Analysis},
  author={Horn, Roger A and Johnson, Charles R},
  year={2012},
  publisher={Cambridge University Press}
}

@inproceedings{lim2005singular,
  title={Singular values and eigenvalues of tensors: a variational approach},
  author={Lim, Lek-Heng},
  booktitle={1st IEEE International Workshop on Computational Advances in Multi-Sensor Adaptive Processing},
  pages={129--132},
  year={2005},
  organization={IEEE},
  doi={10.1109/CAMAP.2005.1574201}
}

@inproceedings{yuan2010computing,
  title={Computing the {D}ixon derived polynomial by combination resultant method},
  author={Yuan, Xun and Zhang, Jingzhong and Feng, Yong},
  booktitle={2010 International Conference on Computational and Information Sciences},
  pages={681--684},
  year={2010},
  organization={IEEE},
  doi={10.1109/ICCIS.2010.169}
}

\end{spacing}
\end{document}